\title[Degree gaps for multipliers]{Degree gaps for multipliers and the dynamical Andr\'{e}-Oort conjecture}
\author{Patrick Ingram}
\address{Department of Mathematics and Statistics, York University}
\newcommand{\QQ}{\mathbb{Q}}
\newcommand{\ZZ}{\mathbb{Z}}
\newcommand{\CC}{\mathbb{C}}
\newcommand{\RR}{\mathbb{R}}
\newcommand{\PP}{\mathbb{P}}
\newcommand{\ord}{\operatorname{ord}}
\renewcommand{\epsilon}{\varepsilon}
\newtheorem*{theorem}{Theorem}
\newtheorem*{corollary}{Corollary}
\newtheorem{lemma}{Lemma}
\newtheorem{prop}[lemma]{Proposition}
\theoremstyle{remark}
\newtheorem{remark}[lemma]{Remark}
\theoremstyle{definition}
\begin{document}
\begin{abstract}
We demonstrate how recent work of Favre and Gauthier, together with a modification of a result of the author, shows that a family of polynomials with infinitely many post-critically finite specializations cannot have any periodic cycles with multiplier of very low degree, except those which vanish, generalizing results of Baker and DeMarco, and Favre and Gauthier.
\end{abstract}

\maketitle

\section{introduction}

Let $f(z)\in\CC[z]$ have degree $d\geq 2$. Much about the behaviour of $f$ under iteration can be gleaned from considering just the orbits of the critical points, and so \emph{post-critically finite (PCF)} polynomials, those whose critical orbits are all finite, have garnered particular attention. A dimension count suggests the heuristic that, while PCF polynomials ought to be Zariski-dense in the space of all polynomials of given degree, there ought not exist  algebraic families of positive dimension (modulo change-of-variables), and indeed this is known to be true. The general philosophy of \emph{unlikely intersections} then suggests that any subvariety of this space in which the PCF maps are Zariski-dense must be defined by conditions on critical orbits, a claim now known as the \emph{Dynamical Andr\'{e}-Oort Conjecture}. Several deep results in this direction have been established in recent years~\cite{bd, dwy, fgcubic,   ght, gkn, gkny, gy}, but one of the earliest is of particular interest here.

For fixed $\lambda\in\CC$, Baker and DeMarco~\cite{bd} considered the curve in the space of cubic polynomials cut out by the existence of a fixed point $f(P)=P$ with \emph{multiplier} $\lambda_f(P):=f'(P)$ equal to $\lambda$. This is a condition on critical orbits if and only if $\lambda=0$ (in which case it requires that some critical point is fixed), and Baker and DeMarco showed that this is also exactly the curve in this family on which one sees infinitely many PCF points. This result was extended to cubic polynomials with marked periodic points by Favre and Gauthier~\cite{fgcubic}, and to quadratic rational maps by DeMarco, Wang, and Ye~\cite{dwy}. Generalizing, one might suspect that any one-parameter family of polynomials with a marked periodic point of multiplier $\lambda$ witnesses infinitely many PCF specializations only if $\lambda =0$. This is our main result, not just in the case that $\lambda$ is constant, but even if it varies with low degree as a function on the parametrizing curve.

\begin{theorem}
Let $f$ be a family of polynomials parametrized over a curve $X$, with a marked periodic point $P$ of multiplier $\lambda_f(P)\in\CC(X)$, and suppose that $f_t$ is PCF for infinitely many $t\in X(\CC)$. If $\deg(\lambda_f(P))<h_{\mathrm{crit}}(f)$, then $\lambda_f(P)= 0$ identically on $X$.
\end{theorem}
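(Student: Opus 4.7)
The plan is to translate the hypotheses into a comparison between two heights on the parameter curve $X$ and then derive a contradiction. The critical height $h_{\mathrm{crit}}(f)$ is a canonical height arising from the position of $f$ in the moduli space of polynomials, while $\deg(\lambda_f(P))$ is the naive Weil height of the rational function $\lambda_f(P)\colon X\to \PP^1$. The strict inequality $\deg(\lambda_f(P)) < h_{\mathrm{crit}}(f)$ assumed in the theorem is then a strict inequality between these two heights; it suffices to prove that, under the standing hypothesis that $\lambda_f(P)\not\equiv 0$, having infinitely many PCF specializations forces the reverse bound $h_{\mathrm{crit}}(f) \leq \deg(\lambda_f(P))$.

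First I would invoke the recent equidistribution theorem of Favre--Gauthier: the existence of infinitely many PCF specializations forces, at every place of $\CC(X)$, the adelic measures built from the critical orbits to converge to the bifurcation measure, so that the dynamics of the family are rigidly controlled by the critical points. Next I would exploit a standard dichotomy for PCF polynomials: at each PCF parameter $t$, the marked cycle is either superattracting (giving $\lambda_{f_t}(P) = 0$) or repelling (giving $|\lambda_{f_t}(P)| > 1$ at the archimedean place). Combined with the equidistribution, the archimedean local heights of $\lambda_f(P)$ picked up from the repelling PCF specializations can be shown to dominate the critical height, and vanish identically only when $\lambda_f(P)\equiv 0$.

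The final step is to upgrade this local/archimedean statement to the global height inequality $h_{\mathrm{crit}}(f) \leq \deg(\lambda_f(P))$. This is precisely the point at which the modification of the author's earlier work enters: the original version handled only the case of a constant $\lambda\in\CC$, as used in \cite{bd, fgcubic, dwy}, whereas here $\lambda_f(P)$ is allowed to vary as a non-constant rational function on $X$. The main obstacle will be the careful accounting of the non-archimedean local contributions at the poles of $\lambda_f(P)$ on $X$; without a clean book-keeping of these, the archimedean gain is offset and the degree inequality fails to close. Once the height comparison is carried through in this variable-$\lambda$ setting, the conclusion of the theorem follows immediately from the strict inequality hypothesised in the statement.
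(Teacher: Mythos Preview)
Your proposal has a genuine gap rooted in a confusion about which places are in play. The entire argument takes place over the function field $\CC(X)$, where every absolute value is non-archimedean (coming from a point $v\in X(\CC)$); there is no archimedean place. Your dichotomy ``at each PCF parameter $t$ the cycle is superattracting or repelling, so $|\lambda_{f_t}(P)|>1$'' is a statement about complex specializations, not about places of $\CC(X)$, and there is no mechanism in your sketch for converting information about $|\lambda_{f_t}(P)|_\infty$ at individual PCF parameters into a lower bound on $\deg(\lambda_f(P))=\sum_{v\in X(\CC)}\log^+|\lambda_f(P)|_v$. Equidistribution of PCF parameters controls measures on $X(\CC)$, not local heights at function-field places, so ``archimedean local heights picked up from repelling PCF specializations'' does not make sense in this setting.

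What the paper actually extracts from Favre--Gauthier is much more specific than generic equidistribution: after reducing to a fixed point and passing to the normal form $f_{\mathbf{c}}$ with $P=0$, one uses their result that the divisors $D(f,c_i)=\sum_v G_{f,v}(c_i)[v]$ are pairwise proportional, so after reindexing one has $G_{f,v}(c_1)\geq G_{f,v}(c_i)$ for every $i$ and every place $v$. The proof then hinges on two purely valuation-theoretic lemmas you did not identify. The first is local: at any place $v$ where $|c_1|_v<\|\mathbf{c}\|_v$ and $\|\mathbf{c}\|_v>1$, some other $c_i$ has strictly larger Green's function than $c_1$, which is now forbidden; hence at every such place $\log^+\|\mathbf{c}\|_v=0$. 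The second is global: using only the product formula for $\lambda_f(0)=\pm\prod c_i$, one shows that if $\lambda_f(0)\not\equiv 0$ then the set $S$ of such places must carry at least $(h_{\mathrm{crit}}(f)-\deg(\lambda_f(P)))/(d-1)$ of the critical height. Combining the two gives $0\geq h_{\mathrm{crit}}(f)-\deg(\lambda_f(P))$, contradicting the hypothesis. Your outline contains neither of these ingredients, and the route you sketch through specialization values does not reach them.
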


The quantity $h_{\mathrm{crit}}(f)\geq 0$ is a measure of complexity of the generic critical orbits defined below, whose vanishing exactly coincides with isotriviality of the family. For any non-isotrivial family, we have
\begin{equation}\label{eq:range}0\leq \frac{\deg(\lambda_f(P))}{h_\mathrm{crit}(f)}\leq d-1,\end{equation}
and every rational number in this range is realized by some example (see Proposition~\ref{prop:range}). In other words, our main result is that  a gap appears at the left-hand-side of this range once one restricts attention to families with infinitely many PCF specializations, with the ratio never falling below 1 (excepting examples with $\lambda_f(P)\equiv 0$).  While we do not know if the Theorem is sharp, we show in Remark~\ref{rem:sharp} that the statement of the Theorem becomes false if, for any $\epsilon>0$, we replace the condition that $\deg(\lambda_f(P))<h_{\mathrm{crit}}(f)$ by the weaker condition that $\deg(\lambda_f(P))<(2+\epsilon)h_{\mathrm{crit}}(f)$.

The Theorem follows from recent work of Favre and Gauthier~\cite{fg}, and a modification of contributions of the author~\cite{pern}. It implies a generalization of the aforementioned results of Baker and DeMarco~\cite{bd} and Favre and Gauthier~\cite{fgcubic} on families of cubic polynomials with marked periodic point of constant multiplier. 

%
%Now let $f$ be a family of polynomials whose coefficients are rational functions on some curve $X/\CC$. To this family we may assign a measure of complexity of the generic critical orbits, $h_{\mathrm{crit}}(f)\geq 0$, which vanishes just in case the family is isotrivial.
%In general, if $f$ is non-isotrivial, and if $P$ is periodic for $f$ with multiplier $\lambda_f(P)\in\CC(X)$ then we have
%\[0\leq \frac{\deg(\lambda_f(P))}{h_\mathrm{crit}(f)}\leq d-1,\]
%and every rational number in this range is thus realized (see below).
%From recent work of Favre and Gauthier~\cite{fg}, and a modification of contributions of the author~\cite{pern}, we derive the following result, which shows that for families with infinitely many post-critically finite (PCF) specializations (and in contrast with the general case) there is a gap in degrees of multipliers.
%\begin{theorem}
%Let $f$ be a family of polynomials parametrized over a curve $X$, with a periodic point $P$ of multiplier $\lambda_f(P)$, and suppose that $f_t$ is PCF for infinitely many $t\in X(\CC)$. Then $\lambda_f(P)= 0$ identically on $X$ or else
%\[\deg(\lambda_f(P))\geq h_{\mathrm{crit}}(f).\]
%\end{theorem}

\begin{corollary}
Let $\lambda\in \CC$, and let $f$ be a non-isotrivial family of polynomials with a periodic point of constant multiplier $\lambda\in\CC$, parametrized over a curve $X$. If $f_t$ is PCF for infinitely many $t\in X(\CC)$, then $\lambda=0$.
\end{corollary}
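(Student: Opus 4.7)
The plan is to deduce the Corollary directly from the Theorem by verifying its two quantitative hypotheses, namely $\deg(\lambda_f(P)) < h_{\mathrm{crit}}(f)$ and that $f_t$ is PCF for infinitely many $t$. The second is given in the statement. For the first, I would observe that when $\lambda_f(P)$ equals a constant $\lambda \in \CC$ as an element of $\CC(X)$, its degree as a morphism $X \to \PP^1$ is zero: constant rational functions correspond to constant (hence non-dominant) morphisms, which carry degree zero under the standard convention used throughout the text.

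On the other side of the inequality, non-isotriviality of $f$ gives $h_{\mathrm{crit}}(f) > 0$ strictly. This is the content of the remark just below the Theorem, where it is stated that the vanishing of $h_{\mathrm{crit}}(f)$ coincides exactly with isotriviality. Combining these, $0 = \deg(\lambda_f(P)) < h_{\mathrm{crit}}(f)$, the Theorem applies, and it yields $\lambda_f(P) \equiv 0$ on $X$, i.e.\ $\lambda = 0$.

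In terms of obstacles there is essentially nothing to do beyond matching conventions: one should confirm that the $\deg$ in the Theorem is indeed the standard morphism degree for rational functions on $X$ (so constants have degree zero), and that $h_{\mathrm{crit}}(f) > 0$ is strict rather than merely nonnegative for non-isotrivial $f$. Both are clear from the surrounding discussion and from the range in (\ref{eq:range}), which would make no sense otherwise. Thus the Corollary is simply the special case of the Theorem in which the degree of the multiplier function is pinned to its minimum value zero, recovering in particular the Baker--DeMarco and Favre--Gauthier statements for cubic families with a marked periodic point of constant multiplier.
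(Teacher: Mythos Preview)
Your proposal is correct and is precisely the intended deduction: the paper does not write out a separate proof of the Corollary, treating it as the immediate special case of the Theorem obtained by observing that a constant $\lambda\in\CC$ has $\deg(\lambda_f(P))=0$ while non-isotriviality forces $h_{\mathrm{crit}}(f)>0$. Your checks on the conventions (degree of a constant via~\eqref{eq:height}, strict positivity of $h_{\mathrm{crit}}$ from the equivalence with isotriviality) are exactly right.
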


The  reader will have noted that this corollary generalizes only one direction of the results for cubic polynomials, but it is the non-trivial direction, and in general the converse to the corollary is false (see Remark~\ref{rem:contra}).

The theorem is proved by applying the main step in the proof of~\cite[Theorem~G]{fg} of Favre and Gauthier (whom the author thanks for comments on an earlier version of this note), which shows that if $f$ has infinitely many PCF specializations, then $f$ has, on the generic fibre and  up to a natural equivalence, at most one infinite critical orbit. The author showed in~\cite{pern} that a non-isotrivial family with a marked periodic point of multiplier $\lambda$ of sufficiently large degree has at least two independent, infinite critical orbits, but the notion of dependence used there was not the same, and some work is required to combine the results.

\section{Notation and conventions}\label{sec:notation}

Let $F$ be a field, and let $|\cdot|$ be an absolute value on the algebraic closure $\overline{F}$.
For $f(z)\in F[z]$ of degree $d\geq 2$, a \emph{point of period $n$} is a solution to $f^n(Q)=Q$ (with $n$ minimal), and the \emph{multiplier} of this point is
\[\lambda_f(Q)=(f^n)'(Q)=\prod_{i=0}^{n-1}f'(f^i(Q)).\]
For $P\in\overline{F}$, we  define as usual
\[G_f(P)=\lim_{n\to\infty}d^{-n}\log^+|f^n(P)|,\]
where $\log^+ x= \log\max\{1, x\}$ for $x\in\RR$.
This limit  always exists, and vanishes at preperiodic points.
We will set
\[g_{\mathrm{crit}}(f)=\max_{f'(c)=0}G_f(c),\]
where we take solutions in the algebraic closure.
It is straightforward to show that $g_{\mathrm{crit}}(f)$ is independent of coordinates, in that if $\phi$ is an affine transformation with coefficients in $\overline{F}$,
\[g_{\mathrm{crit}}(\phi^{-1}\circ f\circ \phi) = g_{\mathrm{crit}}(f). \]

For most of our argument, we will restrict attention to a certain normal form for polynomials, namely
\begin{align}\label{eq:form}
f_{\mathbf{c}}(z)&=\frac{1}{d}z^d-\frac{1}{d-1}(c_1+\cdots +c_{d-1})z^{d-1}+\cdots \pm c_1c_2\cdots c_{d-1}z\\
&=\sum_{i=1}^d \frac{(-1)^{d-i}}{i}\sigma_{d-i, d-1}(\mathbf{c})z^i\nonumber
\end{align}
just as in~\cite{pcfpoly}, where $\sigma_{j, k}$ is the fundamental symmetric polynomial of degree $k$ in $j$ variables. Note that $f_{\mathbf{c}}(0)=0$, and \[f'_\mathbf{c}(z)=(z-c_1)\cdots (z-c_{d-1}),\] so that $c_1, ..., c_{d-1}$ are the critical points of $f_{\mathbf{c}}$.

Now let $X/\CC$ be a curve, assumed without loss of generality to be smooth and projective, and let $|\cdot|$ be a non-trivial absolute value on the  function field $\CC(X)$ which is trivial on $\CC$. These all have the form
$|a|=e^{-C\ord_{z=v}(a)}$
for some $C>0$ and $v\in X(\CC)$. We will normalize these by taking $C=1$, and so to each $v\in X(\CC)$ we identify the absolute value
\[|a|_v=e^{-\ord_{z=v}(a)}.\]
%Any finite extension of $E/\CC(X)$ is the function field of a smooth projective cover $Y\to X$ of degree $m=[E:\CC(X)]$, and we can extend $|\cdot|_v$ to $E$ by taking
%\[|a|_v=|a|_w^{r/m}\]
%where $w\in Y(\CC)$ is a point mapping to $v\in X(\CC)$ with local degree $r$. 
It is possible to extend $|\cdot|_v$ (non-uniquely) to the algebraic closure of $\CC(X)$.

 Note that for $a\in\CC(X)$ non-zero,
\begin{equation}\label{eq:prodfla}\sum_{v\in X(\CC)}\log|a|_v=0\end{equation}
and
\begin{equation}\label{eq:height}\sum_{v\in X(\CC)}\log^+|a|_v=\deg(a).\end{equation}
In this setting, degree is the appropriate notion of \emph{height}, and for a tuple $\mathbf{a}=(a_1, ..., a_k)\in\CC(X)^k$ we will set
\begin{equation}\label{eq:heighttuple}h(\mathbf{a})=\sum_{v\in X(\CC)}\log^+\|\mathbf{a}\|_v.\end{equation}
Notice that $h(\mathbf{a})=0$ if and only if all $a_i$ are constant. The \emph{critical height} of a polynomial $f(z)$ with coefficients in $\CC(X)$ is defined here as
\[h_{\mathrm{crit}}(f)=\sum_{v\in X(\CC)}g_{\mathrm{crit}, v}(f),\]
where $g_{\mathrm{crit}, v}$ is defined as above relative to the absolute value $|\cdot|_v$.
Note that the critical height is independent of the choice of coordinates. This is not the same critical height $\hat{h}_{\mathrm{crit}}(f)$ as used in~\cite{pcfpoly, hcrit, barbados}, but one sees from the non-negativity of $G_{f}$ that
\[h_{\mathrm{crit}}(f)\leq \hat{h}_{\mathrm{crit}}(f)\leq (d-1)h_{\mathrm{crit}}(f).\]

Finally, note that if $\pi:Y\to X$ is a finite branched cover, then $\pi^*$ gives an embedding of $\CC(X)$ into $\CC(Y)$, and
\begin{equation}\label{eq:localdeg}|\pi^*a|_w=|a|_{\pi(w)}^{e_\pi(a)},\end{equation}
where $e_{\pi}(a)$ is the degree of $\pi$ locally at $a$.
If $\mathbf{a}\in\CC(X)^k$, we may pull back the components to obtain a tuple $\pi^*\mathbf{a}\in\pi^*\CC(X)^k\subseteq \CC(Y)^k$, and it is easy to check from the definitions above that
\[h(\pi^*\mathbf{a})=\deg(\pi)h(\mathbf{a}).\]
Similarly, if
$f(z)\in \CC(X)[z]$, we may pull-back coefficients to obtain a polynomial $\pi^*f\in \CC(Y)$ and we have
\begin{equation}\label{eq:hcritpull}h_\mathrm{crit}(\pi^*f)=\deg(\pi)h_{\mathrm{crit}}(f).\end{equation}
Finally note that since any finite extension of $\CC(X)$ is of the form $\CC(Y)$ as above, \eqref{eq:localdeg} allows us to choose an extension of any absolute value on $\CC(X)$ to any finite extension, and hence to the algebraic closure, as claimed above.

\section{Local lemmas}

In this section, we work over $\CC(X)$ relative to some fixed absolute value $|\cdot|$. We note, though, for interest, that we may replace $\CC(X)$ with any field of characteristic 0 or greater than $d$, with a non-archimedean, non-$p$-adic absolute value $|\cdot|$, so that 
\[|x+y|\leq \max\{|x|, |y|\}\]
for all $x, y\in F$, and $|m|=1$ for any non-zero integer $m$. This setting gives us a particularly strong variant of the results in~\cite{pcfpoly}.

\begin{lemma}\label{lem:gcrit}
For $\mathbf{c}\in \CC(X)^{d-1}$  and $f_\mathbf{c}$ as in~\eqref{eq:form}, we have
\[g_{\mathrm{crit}}(f_\mathbf{c})=\log^+\|\mathbf{c}\|.\]
\end{lemma}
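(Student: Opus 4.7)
The plan is to establish the two inequalities separately; set $R = \max(1, \|\mathbf c\|)$, so the target is $g_{\mathrm{crit}}(f_{\mathbf c}) = \log R$.

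\textbf{Upper bound.} By homogeneity, $|\sigma_{d-i,d-1}(\mathbf c)| \leq \|\mathbf c\|^{d-i} \leq R^{d-i}$, and since the residue characteristic is either zero or greater than $d$, each $|1/i| = 1$. The ultrametric inequality applied to the displayed expansion of $f_{\mathbf c}$ then yields $|f_{\mathbf c}(z)| \leq R^d$ for all $|z| \leq R$, while for $|z| > R$ the leading term $z^d/d$ strictly dominates every other term and $|f_{\mathbf c}(z)| = |z|^d$. In either case $|f_{\mathbf c}(z)| \leq \max(R, |z|)^d$, so by induction $|f_{\mathbf c}^n(c_i)| \leq R^{d^n}$ for every critical point $c_i$, whence $G_{f_{\mathbf c}}(c_i) \leq \log R = \log^+\|\mathbf c\|$.

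\textbf{Lower bound.} If $\|\mathbf c\| \leq 1$ there is nothing to prove. Otherwise let $J = \{j : |c_j| = R\}$, fix $j_0 \in J$, and set $u = c_{j_0}$ and $\gamma_i = c_i/u$. The homogeneity of $\sigma_{d-i,d-1}$ gives the clean scaling identity $f_{\mathbf c}(uz) = u^d f_\gamma(z)$, so $f_{\mathbf c}(c_j) = u^d f_\gamma(\gamma_j)$. The tuple $\gamma$ lies in the valuation ring with $\bar\gamma_j \neq 0$ exactly when $j \in J$. It therefore suffices to exhibit some $j \in J$ with $|f_\gamma(\gamma_j)|=1$: for then $|f_{\mathbf c}(c_j)| = R^d > R$, and the ``$|z|>R$'' case of the previous paragraph gives $|f_{\mathbf c}^n(c_j)| = R^{d^n}$ for all $n \geq 1$, so $G_{f_{\mathbf c}}(c_j) = \log R$.

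\textbf{The main step} is the non-vanishing assertion, which I prove by contradiction. Assume $\bar f_{\bar\gamma}(\bar\gamma_j) = 0$ for every $j \in J$, and let $a_1, \dots, a_r$ be the distinct (nonzero) values taken by $\{\bar\gamma_j : j \in J\}$, with multiplicities $m_k$ (so $\sum_k m_k = |J|$). Then $\bar f'_{\bar\gamma}(z) = z^{d-1-|J|}\prod_k(z-a_k)^{m_k}$, so using $\bar f_{\bar\gamma}(0) = 0$ one sees $\bar f_{\bar\gamma}$ vanishes to order exactly $d-|J|$ at $0$; and, combined with the assumption $\bar f_{\bar\gamma}(a_k) = 0$, to order at least $m_k + 1$ at each $a_k$ (here one uses again that $m_k + 1 \leq d$ is invertible in the residue field). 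Adding contributions, the total vanishing order is at least $(d-|J|) + \sum_k (m_k + 1) = d + r \geq d + 1$, contradicting $\deg \bar f_{\bar\gamma} = d$. This degree count is the core of the argument and the place where the hypothesis on residue characteristic really earns its keep.
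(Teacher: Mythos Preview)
Your proof is correct. The upper bound matches the paper's essentially verbatim (the paper just says $f^n_{\mathbf c}(c_i)$ is a polynomial of degree $d^n$ in $\mathbf c$, which is your estimate compressed into one line).

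For the lower bound the paper takes a different route: it cites \cite{pcfpoly} for the fact that the homogeneous forms $f_{\mathbf c}(c_1),\dots,f_{\mathbf c}(c_{d-1})$ have no common zero in $\PP^{d-2}$, and then invokes Hilbert's Nullstellensatz to conclude that $\max_i |f_{\mathbf c}(c_i)| = \|\mathbf c\|^d$. Your scaling $c_i = u\gamma_i$ followed by reduction to the residue field, together with the vanishing-order count $(d-|J|)+\sum_k(m_k+1)=d+r>d$, is in effect a direct, self-contained proof of that same no-common-zero statement, carried out over the residue field rather than over $\CC$. What you gain is that the argument no longer depends on an external reference or on the Nullstellensatz, and it makes transparent exactly where the hypothesis that the residue characteristic is $0$ or $>d$ enters (invertibility of $d-|J|$ and of each $m_k+1$). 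What the paper's version buys is brevity, at the cost of a black-box citation.
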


\begin{proof}
This follows from~\cite{pcfpoly}, but briefly we have $f^n_\mathbf{c}(c_i)\in \CC[\mathbf{c}]$ of degree $d^n$, so $d^{-n}\log^+|f_\mathbf{c}(c_i)|\leq \log^+\|\mathbf{c}\|$, from which $G_{f_{\mathbf{c}}}(c_i)\leq \log^+\|\mathbf{c}\|$, establishing an inequality in one direction.

 In the other direction, it is shown in~\cite{pcfpoly} that the homogeneous forms $f_{\mathbf{c}}(c_i)$ (for $1\leq i\leq d-1$) have no common root, and by Hilbert's Nullstellensatz there  is an $i$ with $\log\|f_{\mathbf{c}}(c_i)\|=d\log\|\mathbf{c}\|$. If $\log\|\mathbf{c}\|>0$ then by induction we have $\log^+|f_\mathbf{c}^n(c_i)|=d^n\log^+\|\mathbf{c}\|$, giving a bound in the opposite direction. 
In the case $\log\|\mathbf{c}\|\leq 0$ this direction is trivial by the non-negativity of $G_f$.
\end{proof}

The following is a stronger form of~\cite[Lemma~7]{pern}.

\begin{lemma}\label{lem:localindep}
If $\log|c_1|<\log\|\mathbf{c}\|$, then either $\log^+\|\mathbf{c}\|=0$ or else there exists an index $i\neq 1$ with
\[G_{f_{\mathbf{c}}}(c_i)>G_{f_{\mathbf{c}}}(c_1).\]
\end{lemma}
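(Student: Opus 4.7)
Set $M=\|\mathbf{c}\|$ and assume $M>1$, since otherwise $\log^+\|\mathbf{c}\|=0$ and we are done. By Lemma~\ref{lem:gcrit} we have $\max_i G_{f_\mathbf{c}}(c_i)=\log M$, so it will suffice to prove the strict inequality $G_{f_\mathbf{c}}(c_1)<\log M$: some index $i\neq 1$ is then forced to attain the maximum $\log M$, which in particular exceeds $G_{f_\mathbf{c}}(c_1)$.

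The plan rests on two elementary ultrametric estimates, which together exploit both the hypothesis $|c_1|<M$ and the fact that $|(-1)^{d-i}/i|=1$ (thanks to the standing assumption $|m|=1$ for nonzero integers $m$). Bounding term-by-term in~\eqref{eq:form}, for every $z$ with $|z|\leq M$ one has $|f_\mathbf{c}(z)|\leq M^d$, while for $|z|>M$ the leading term $z^d/d$ strictly dominates the others, giving $|f_\mathbf{c}(z)|=|z|^d$. More importantly, since $|c_1|<M$ the $i=1$ term dominates in the expansion of $f_\mathbf{c}(c_1)$, yielding the refined bound
\[|f_\mathbf{c}(c_1)|\leq \max_{1\leq i\leq d} M^{d-i}|c_1|^i = M^{d-1}|c_1| < M^d.\]

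With these estimates in hand I would split on the behaviour of the forward orbit of $c_1$. If $|f_\mathbf{c}^n(c_1)|\leq M$ for every $n\geq 0$, the orbit is bounded and $G_{f_\mathbf{c}}(c_1)=0<\log M$. Otherwise let $n_0\geq 1$ be minimal with $|f_\mathbf{c}^{n_0}(c_1)|>M$; past this step the second ultrametric estimate iterates cleanly, so $G_{f_\mathbf{c}}(c_1)=d^{-n_0}\log|f_\mathbf{c}^{n_0}(c_1)|$. When $n_0=1$, the refined estimate on $f_\mathbf{c}(c_1)$ delivers $G_{f_\mathbf{c}}(c_1)<\log M$ directly. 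When $n_0\geq 2$, the coarse bound $|f_\mathbf{c}^{n_0}(c_1)|\leq M^d$ (applied at the previous step, where $|f_\mathbf{c}^{n_0-1}(c_1)|\leq M$) already suffices, via the free factor $d^{-(n_0-1)}\leq d^{-1}<1$.

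I do not anticipate a substantive obstacle: the argument is mostly bookkeeping layered on top of Lemma~\ref{lem:gcrit}. The one delicate point is keeping the strictness of $|c_1|<M$ alive precisely at the $n_0=1$ step, which is where the conclusion genuinely improves upon \cite[Lemma~7]{pern}; elsewhere the inequalities are coarse enough to afford margin.
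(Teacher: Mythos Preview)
Your argument is correct, and it takes a genuinely different route from the paper's. The paper proceeds by a uniform induction: exploiting the divisibility $f_\mathbf{c}(c_1)\in c_1^2\,\CC[\mathbf{c}]$ and fixing $\epsilon$ with $\log|c_1|\leq(1-\epsilon)\log M$, it shows $\log|f_\mathbf{c}^k(c_1)|\leq(d-2\epsilon)d^{k-1}\log M$ for every $k\geq 1$, yielding the explicit gap $G_{f_\mathbf{c}}(c_1)\leq(1-2\epsilon/d)\log M$. You instead set up the ultrametric dichotomy that $f_\mathbf{c}$ sends $\{|z|\leq M\}$ into $\{|z|\leq M^d\}$ and acts as $|z|\mapsto|z|^d$ outside, and then split on the escape time $n_0$. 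This is cleaner---no $\epsilon$ bookkeeping, and the hypothesis $|c_1|<M$ is invoked only once, at the $n_0=1$ step---whereas the paper's version extracts a sharper quantitative gap from the $c_1^2$ structure (your bound $|f_\mathbf{c}(c_1)|\leq M^{d-1}|c_1|$ is weaker than the paper's $|f_\mathbf{c}(c_1)|\leq M^{d-2}|c_1|^2$, though entirely sufficient for the bare strict inequality). For the lemma as stated either approach is fine; the paper's would be preferable only if one later needed an effective lower bound on $\log M - G_{f_\mathbf{c}}(c_1)$.
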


\begin{proof}
Suppose that $\log|c_1|<\log\|\mathbf{c}\|$ and that $\log^+\|\mathbf{c}\|\neq 0$, or in other words $\log\|\mathbf{c}\|>0$. Choose $0<\epsilon<1$ so that $\log|c_1|\leq(1-\epsilon)\log\|\mathbf{c}\|$.

First, note that $f_{\mathbf{c}}(c_1)\in c_1^2\CC[c_1, ..., c_{d-1}]$, and so
\[\log|f_{\mathbf{c}}(c_1)|\leq 2\log|c_1|+(d-2)\log^+\|\mathbf{c}\|\leq (d-2\epsilon)\log^+\|\mathbf{c}\|.\]
Now, suppose that $\log|f^k_{\mathbf{c}}(c_1)|\leq (d-2\epsilon)d^{k-1}\log^+\|\mathbf{c}\|$. For $\sigma_i$ the degree-$i$ symmetric function in $d-1$ variables, we then have for $i< d-1$
\begin{align*}
\log\left|\frac{1}{i}\sigma_{d-i}(c_1, ..., c_{d-1})(f^k_{\mathbf{c}}(c_1))^i\right|&\leq (d-i)\log^+\|\mathbf{c}\|+i\log|f_{\mathbf{c}}^k(c_1)|\\
	&< (d-i)\log^+\|\mathbf{c}\|+i(d-2\epsilon)d^{k-1}\log^+\|\mathbf{c}\|\\
&< (d-2\epsilon)d^{k}\log^+\|\mathbf{c}\|
\end{align*}
just because $1< (d-2\epsilon)d^{k-1}$. So
\[\begin{split}
d^{-(k+1)}\log|f_{\mathbf{c}}^{k+1}(c_1)|&\leq \max_{1\leq i\leq d-1}\left\{d\log\left|f_{\mathbf{c}}^{k}(c_1)\right|, \log\left|\frac{1}{i}\sigma_{d-i}(c_1, ..., c_{d-1})(f^k_{\mathbf{c}}(c_1))^i\right|\right\}\\
&< \left(1-\frac{2\epsilon}{d}\right)\log^+\|\mathbf{c}\|,
\end{split}\]
and by induction \[G_{f_\mathbf{c}}(c_1)\leq\left(1-\frac{2\epsilon}{d}\right)\log^+\|\mathbf{c}\|< \log^+\|\mathbf{c}\|.\]
But we have already seen that there is some index $i$ with $G_{f_\mathbf{c}}(c_i)=\log^+\|\mathbf{c}\|$.
\end{proof}

\section{Proof of the Theorem}

We maintain the notation of the last section, but now vary the absolute value on $\CC(X)$. For each $v\in X(\CC)$, quantities from the previous sections relative to the absolute value $|\cdot|_v$ defined in Section~\ref{sec:notation} acquire a subscript $v$.

Note that by Lemma~\ref{lem:gcrit}, we have 
\begin{equation}\label{eq:hcrit} h_{\mathrm{crit}}(f_{\mathbf{c}})=\sum_{v\in X(\CC)}g_{\mathrm{crit}, v}(f_\mathbf{c})=\sum_{v\in X(\CC)}\log^+\|\mathbf{c}\|_v=h(\mathbf{c}),\end{equation}
and so $h_{\mathrm{crit}}(f_{\mathbf{c}})=0$ if and only if the $c_i$, and hence coefficients of $f_{\mathbf{c}}$, are all constant.

 On the other hand, for any polynomial $f(z)\in \CC(X)$ there is a finite extension $\CC(Y)/\CC(X)$ and a map $\phi(z)=az+b$ defined over $\CC(Y)$ with $\phi^{-1}\circ f\circ \phi$ in the form~\eqref{eq:form}. Then from~\eqref{eq:hcritpull} we have $h_{\mathrm{crit}}(f)=0$ if and only if $\phi^{-1}\circ f\circ \phi$ has constant coefficients, showing that the vanishing of $h_{\mathrm{crit}}$ is equivalent to isotriviality.

The following lemma shows that in the normal form~\eqref{eq:form}, quantity $h_{\mathrm{crit}}(f)$ can be estimated using information from only a subset of the points of $X$, and is a variant of~\cite[Lemma~9]{pern}. We define
\begin{equation}\label{eq:Sdef}S=S(\mathbf{c})=\left\{v\in X(\CC):\log|c_1|_v<\log\|\mathbf{c}\|_v\right\}.\end{equation}
In other words, $S$ is the finite set of places of $X$ at which one of the functions $c_2/c_1, c_3/c_1, ..., c_{d-1}/c_1$ has a pole. Intuitively, as we approach a point of $S$ on $X(\CC)$, some other critical point becomes arbitrarily-much larger than the critical point marked by $c_1$, but the reader will also notice that $S$ is precisely the set of places at which the condition in Lemma~\ref{lem:localindep} is met.

\begin{lemma}\label{lem:Sweight}
If $\lambda_{f_{\mathbf{c}}}(0)\neq 0$, then 
\[(d-1)\sum_{v\in S}\log^+\|\mathbf{c}\|_v\geq h_{\mathrm{crit}}(f_{\mathbf{c}})-\deg(\lambda_{f_{\mathbf{c}}}(0)).\]
\end{lemma}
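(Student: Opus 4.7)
The plan is to recast the inequality at the level of divisors on $X$. For each $i$, set $n_i(v) := -\ord_v(c_i)$; then $n_i$ is supported on finitely many places and satisfies $\sum_v n_i(v) = 0$ by the product formula. In this language $\gamma_v := \log^+\|\mathbf{c}\|_v = \max\{0, \max_i n_i(v)\}$, and from the normal form $f'_\mathbf{c}(z) = \prod_i(z-c_i)$ together with $f_\mathbf{c}(0)=0$ one computes $\lambda_{f_\mathbf{c}}(0) = \pm c_1 c_2 \cdots c_{d-1}$, so $\delta_v := \log^+|\lambda_{f_\mathbf{c}}(0)|_v = \max\{0, \sum_i n_i(v)\}$. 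The definition of $S$ becomes $S = \{v : n_1(v) < \max_i n_i(v)\}$. Subtracting $\sum_{v \in S}\gamma_v$ from both sides of the desired inequality, it suffices to prove
\[\sum_{v \notin S}\gamma_v \leq \deg(\lambda_{f_\mathbf{c}}(0)) + (d-2)\sum_{v \in S}\gamma_v.\]

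The first observation is that for $v \notin S$ one has $n_1(v) = \max_i n_i(v)$, so $\gamma_v = \max\{0, n_1(v)\}$, and hence $\sum_{v \notin S}\gamma_v = \sum_{v \in V_0 \setminus S} n_1(v)$, where $V_0 := \{v : \max_i n_i(v) > 0\}$. Combined with the pointwise lower bound $\delta_v \geq \sum_i n_i(v)$, which holds simply because $\max\{0, x\} \geq x$, summing over $v \in V_0 \setminus S$ yields
\[\deg(\lambda_{f_\mathbf{c}}(0)) \geq \sum_{v \in V_0 \setminus S}\delta_v \geq \sum_{v \in V_0 \setminus S}n_1(v) + \sum_{i=2}^{d-1} N_i, \qquad N_i := \sum_{v \in V_0 \setminus S}n_i(v).\]

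The heart of the argument is then to bound $-N_i$ via the product formula applied to $c_i$ individually. Writing $0 = \sum_v n_i(v) = N_i + \sum_{v \in V_0 \cap S}n_i(v) + \sum_{v \notin V_0}n_i(v)$, the third sum is non-positive because $n_i(v) \leq \max_j n_j(v) \leq 0$ off $V_0$, while the second sum is bounded term-by-term by $\max\{0, \max_j n_j(v)\} = \gamma_v$. Hence $-N_i \leq \sum_{v \in V_0 \cap S}\gamma_v = \sum_{v \in S}\gamma_v$, and summing over $i = 2, \dots, d-1$ gives $-\sum_{i \geq 2} N_i \leq (d-2)\sum_{v \in S}\gamma_v$. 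Inserting this into the previous displayed inequality finishes the proof.

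The main obstacle is recognizing why the naive bound $\deg(c_1) \leq \deg(\lambda_{f_\mathbf{c}}(0)) + \deg(c_2 \cdots c_{d-1})$ is insufficient: the degree of the product $c_2 \cdots c_{d-1}$ is a priori only controlled by $(d-2)h_\mathrm{crit}(f_\mathbf{c})$, not by the smaller quantity $(d-2)\sum_{v \in S}\gamma_v$. The decomposition above, applying the product formula separately to each critical point $c_i$ and splitting the places into the three regions $V_0 \setminus S$, $V_0 \cap S$, and $V_0^c$, is what lets the surplus zero-divisors of the individual $c_i$ be absorbed into the $S$-contribution rather than into the total critical height.
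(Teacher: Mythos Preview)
Your proof is correct. Both you and the paper reduce to the same intermediate inequality
\[\sum_{v\notin S}\log^+\|\mathbf{c}\|_v\leq \deg(\lambda_{f_\mathbf{c}}(0))+(d-2)\sum_{v\in S}\log^+\|\mathbf{c}\|_v,\]
and both arguments rest on the factorization $\lambda_{f_\mathbf{c}}(0)=\pm\prod_i c_i$ together with the product formula, but the bookkeeping is organized dually. The paper applies the product formula once, to the distinguished critical point $c_1$, and then controls $\log|c_1|_v^{-1}$ on $S$ via the crude bound $|c_1|_v^{-1}\leq |\lambda^{-1}|_v\|\mathbf{c}\|_v^{d-2}$ (bounding each $|c_i|_v$, $i\geq 2$, by $\|\mathbf{c}\|_v$); this forces an auxiliary correction term $\frac{1}{d-1}\log^+|\lambda^{-1}|_v$ to pass from $\log\|\mathbf{c}\|_v$ to $\log^+\|\mathbf{c}\|_v$ off $S$, and the paper tracks the \emph{zeros} of $\lambda$ throughout. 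You instead apply the product formula separately to each $c_i$ for $i\geq 2$, split the places into $V_0\setminus S$, $V_0\cap S$, $V_0^c$, and track the \emph{poles} of $\lambda$ directly via $\delta_v$. Your route avoids the auxiliary inequality~\eqref{eq:loglogplus} entirely and is arguably a little cleaner; the paper's route has the minor advantage of invoking the product formula only once. Neither approach gives a sharper constant than the other.
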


\begin{proof}
Since $\prod_{i= 1}^{d-1} c_i=(-1)^{d-1}\lambda_{f_{\mathbf{c}}}(0)$, we have both
\begin{equation}\label{eq:loglogplus}\log\|\mathbf{c}\|_v\leq \log^+\|\mathbf{c}\|_v\leq \log\|\mathbf{c}\|_v+\frac{1}{d-1}\log^+|\lambda_{f_{\mathbf{c}}}(0)^{-1}|_v\end{equation}
and
\begin{equation}\label{eq:c1bound}|c_1|_v^{-1}=|\lambda_{f_{\mathbf{c}}}(0)^{-1}|_v\prod_{i=2}^{d-1}|c_i|_v\leq |\lambda_{f_{\mathbf{c}}}(0)^{-1}|_v\|\mathbf{c}\|_v^{d-2}.\end{equation}
We apply~\eqref{eq:loglogplus} to obtain
\begin{align*}
\sum_{v\not\in S}\log^+\|\mathbf{c}\|_v&\leq  \sum_{v\not\in S}\left(\log\|\mathbf{c}\|_v+\frac{1}{d-1}\log^+|\lambda_{f_{\mathbf{c}}}(0)^{-1}|_v\right)\\ 
&\leq \sum_{v\not\in S}\log|c_1|_v+\sum_{v\not\in S}\frac{1}{d-1}\log^+|\lambda_{f_{\mathbf{c}}}(0)^{-1}|_v,\\
\intertext{by the definition of $S$,}
&= \sum_{v\in S}\log|c_1|^{-1}_v + \sum_{v\not\in S}\frac{1}{d-1}\log^+|\lambda_{f_{\mathbf{c}}}(0)^{-1}|_v,\\
\intertext{by the product formula~\eqref{eq:prodfla},} 
&\leq \sum_{v\in S}\left(\log^+|\lambda_{f_{\mathbf{c}}}(0)^{-1}|_v+(d-2)\log^+\|\mathbf{c}\|_v\right)\\&\quad +\sum_{v\not\in S}\frac{1}{d-1}\log^+|\lambda_{f_{\mathbf{c}}}(0)^{-1}|_v,\\
\intertext{by~\eqref{eq:c1bound}}
&\leq (d-2)\sum_{v\in S}\log^+\|\mathbf{c}\|_v+\sum_{v\in X(\CC)}\log^+|\lambda_{f_{\mathbf{c}}}(0)^{-1}|_v\\
&=  (d-2)h(\mathbf{c})-(d-2)\sum_{v\not\in S}\log^+\|\mathbf{c}\|_v +  \deg(\lambda_{f_{\mathbf{c}}}(0)),
\end{align*}
by~\eqref{eq:heighttuple}. 
But then
\[(d-1)\sum_{v\not\in S}\log^+\|\mathbf{c}\|_v\leq (d-2)h(\mathbf{c})+\deg(\lambda_{f_{\mathbf{c}}}(0)),\]
from which
\begin{align*}
(d-1)\sum_{v\in S}\log^+\|\mathbf{c}\|_v&= (d-1)h(\mathbf{c})- (d-1)\sum_{v\not\in S}\log^+\|\mathbf{c}\|_v\\
&\geq h(\mathbf{c})-\deg(\lambda_{f_{\mathbf{c}}}(0)),
\end{align*}
which by~\eqref{eq:hcrit} is what was claimed. 
\end{proof}

\begin{proof}[Proof of the Theorem]
First, some reductions. We claim that $h_{\mathrm{crit}}(f^n)=h_{\mathrm{crit}}(f)$ for all $n$ (which differs from~\cite{hcrit} because of our slightly different definition of the critical height here). To see this, note that for any $v\in X(\CC)$ we have, by the chain rule and the observation $G_{f^n, v}=G_{f, v}$, that
\begin{align*}
	g_{\mathrm{crit}, v}(f^n)&=\max_{(f^n)'(c)=0}G_{f^n, v}(c)\\
	&=\max_{\substack{f'(f^i(c))=0\\ 0\leq i<n}}G_{f, v}(c)\\
	&=\max_{f'(\zeta)=0}\max_{\substack{f^i(c)=\zeta\\0\leq i<n}}G_{f, v}(c)\\
	&=\max_{f'(\zeta)=0}G_{f, v}(\zeta)\\&=g_{\mathrm{crit}, v}(f),	
\end{align*}
since $f^i(c)=\zeta$ implies $G_{f, v}(c)=d^{-i}G_{f, v}(\zeta)$. Summing over all $v\in X(\CC)$ establishes the claim.
Also, if $P$ is a point of period $n$ and multiplier $\lambda$ for $f$, then $P$ is a fixed point of multiplier $\lambda$ for $f^n$ (by definition). Finally, $f_t$ is PCF if and only if $f_t^n$ is, and so if the claim in the theorem is true for fixed points, it is true for periodic points. We henceforth assume $n=1$, that is, that $f(P)=P$.

Furthermore, the statement of the theorem is preserved under passing to a finite branched cover $\pi:Y\to X$. Specifically, if $f$ and $\lambda$ are already defined over $\CC(X)$, then write  $\pi^*f$ as above for the polynomial obtained by pulling-back the coefficients of $f$ to $Y$. We have $\deg(\pi^*\lambda)=\deg(\pi)\deg(\lambda)$, while $h_{\mathrm{crit}}(\pi^*f)=\deg(\pi)h_{\mathrm{crit}}(f)$, and so the ratio $\deg(\lambda_f(P))/h_{\mathrm{crit}}(f)$ is unchanged. Since the statement of the theorem is also coordinate-free, we may  freely change variables. Passing to a finite extension and choosing a new coordinate we may then assume, without loss of generality, that $f$ has form~\eqref{eq:form} with $c_i\in \CC(X)$, and that $P=0$ is the fixed point in question.

Suppose that $f$ has infinitely many PCF specializations, and for each critical point $c$ write
\[D(f, c)=\sum_{v\in X(\CC)}G_{f,v}(c)[v]\in\operatorname{Div}(X)\otimes \QQ\] as in \cite[Equation~8]{var}, 
which is the same as $\mathsf{D}_{f, c}$ in~\cite[Definition~4.6, p.~114]{fg}. Given any two critical points, neither one generically preperiodic, we have by~\cite[Theorem~37, p.~135]{fg} that the corresponding divisors are proportional. So there is a single divisor $D$ on $X$ such that for each $f'(c)=0$, we have $D(f, c)=\alpha(f, c) D$ for some $\alpha(f, c)\in\QQ$, and in at least one case $\alpha(f, c)\neq 0$. Writing $f$ in the normal form~\eqref{eq:form} we can, by permuting the $c_i$, take $0\leq \alpha(f, c_i)\leq \alpha(f, c_1)\neq 0$ for all $i$. So in particular \[G_{f, v}(c_1)\geq G_{f, v}(c_i)\] for any $1\leq i\leq d-1$ and any $v\in X(\CC)$.
Let $S$ be the set of points of $X$ defined in~\eqref{eq:Sdef}. By Lemma~\ref{lem:localindep}, if $v\in S$, then we must have $\log^+\|\mathbf{c}\|=0$. But by Lemma~\ref{lem:Sweight}, if $\lambda_{f_{\mathbf{c}}}(0)\neq 0$, we now have
\[0=(d-1)\sum_{v\in S}\log^+\|\mathbf{c}\|_v\geq h_{\mathrm{crit}}(f_{\mathbf{c}})-\deg(\lambda_{f_{\mathbf{c}}}(0)).\]
Since all cases were reduced to this one, this proves the theorem in general.
\end{proof}

The Theorem confirmed, we now demonstrate that the ratio $\deg(\lambda_f(P))/h_{\mathrm{crit}}(f)$ really is constrained as in~\eqref{eq:range}.
\begin{prop}\label{prop:range} 
	 If $P\in\CC(X)$ is periodic for $f$, then
	\[\deg(\lambda_f(P))\leq (d-1)h_{\mathrm{crit}}(f).\]
	Furthermore, the ratio $\deg(\lambda_f(P))/h_{\mathrm{crit}}(f)$ can take any rational value in the interval $[0, d-1]$.
\end{prop}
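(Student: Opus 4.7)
The plan is to handle the upper bound by placing $f$ into the normal form~\eqref{eq:form}. After passing, if necessary, to a finite branched cover $\pi:Y\to X$ and applying an affine conjugation (both preserve the ratio $\deg(\lambda_f(P))/h_{\mathrm{crit}}(f)$, the former because $h_{\mathrm{crit}}$ scales by $\deg(\pi)$ via~\eqref{eq:hcritpull} and so does $\deg(\lambda_f(P))$ by the analogous pullback formula), I may assume $f=f_{\mathbf{c}}$ is in normal form with the marked fixed point at $0$; for a periodic point of period $n>1$ I would first reduce to the fixed point of $f^n$, using $h_{\mathrm{crit}}(f^n)=h_{\mathrm{crit}}(f)$ from the Theorem's proof. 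In the fixed-point case, $\lambda_{f_{\mathbf{c}}}(0) = f'_{\mathbf{c}}(0) = (-1)^{d-1}\prod_{j=1}^{d-1}c_j$, so at each $v\in X(\CC)$, Lemma~\ref{lem:gcrit} yields
\[\log^+|\lambda_{f_{\mathbf{c}}}(0)|_v\leq \sum_{j=1}^{d-1}\log^+|c_j|_v\leq (d-1)\log^+\|\mathbf{c}\|_v=(d-1)g_{\mathrm{crit},v}(f),\]
and summing over $v$ produces $\deg(\lambda_f(P))\leq (d-1)h_{\mathrm{crit}}(f)$.

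For the second assertion, I would exhibit explicit one-parameter families of the form $\mathbf{c}(t)=(t^{a_1},\dots,t^{a_{d-1}})$ on $X=\PP^1$ for integer exponents $a_j\in\ZZ$. Applying Lemma~\ref{lem:gcrit}, a direct computation gives $h_{\mathrm{crit}}(f_{\mathbf{c}}) = (A+B)\deg(t)$, where $A=\max\{0,a_1,\dots,a_{d-1}\}$ and $B=-\min\{0,a_1,\dots,a_{d-1}\}$, while $\lambda_{f_{\mathbf{c}}}(0)=\pm\, t^{\sum_j a_j}$ has degree $|\sum_j a_j|\deg(t)$. The resulting ratio is $|\sum_j a_j|/(A+B)\in\QQ$, and any target $p/q\in[0,d-1]\cap\QQ$ in lowest terms is then realized by selecting exponents with $A+B=q$ and $|\sum_j a_j|=p$, which is possible precisely because $0\leq p\leq (d-1)q$.

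The main subtlety I anticipate is the periodic case of period $n>1$. The crude expansion $\lambda_f(P)=\prod_{i=0}^{n-1}\prod_{j=1}^{d-1}(Q_i-c_j)$ with $Q_i=f^i(P)$, combined with the non-escape of periodic orbits at non-archimedean places, yields only $\deg(\lambda_f(P))\leq n(d-1)h_{\mathrm{crit}}(f)$, while applying the fixed-point argument to $f^n$ in its degree-$d^n$ normal form gives $(d^n-1)h_{\mathrm{crit}}(f)$. Extracting the asserted bound $(d-1)h_{\mathrm{crit}}(f)$, independent of the period, will require exploiting the decomposition of the $d^n-1$ critical points of $f^n$ as iterated preimages of the $d-1$ critical points of $f$, whose escape rates are rescaled by factors $d^{-k}$, so that their total contribution to $\deg(\prod k_\alpha)$ collapses back to $(d-1)h_{\mathrm{crit}}(f)$ rather than scaling with $n$ or $d^n$.
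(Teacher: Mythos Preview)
For the fixed-point case your argument is exactly the paper's: reduce to the normal form $f=f_{\mathbf c}$ with $P=0$, write $\lambda=\pm\prod_j c_j$, bound locally by $(d-1)\log^+\|\mathbf c\|_v$ via Lemma~\ref{lem:gcrit}, and sum. Your realization of the rational values is likewise essentially the paper's: both take $c_j=t^{a_j}$ on $\PP^1$, the paper restricting to non-negative exponents and treating the ratio $0$ by a separate family, while you allow $a_j\in\ZZ$; the constructions are interchangeable.

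The real content of your proposal is the last paragraph, and here the comparison is instructive. The paper disposes of period $n>1$ in a single sentence, ``if $P$ has period $n$, we can replace $f$ by $f^n$ and assume that $P$ is fixed,'' and then runs the fixed-point computation verbatim. You correctly observe that this replacement turns the degree into $d^n$, so that the displayed local inequality yields only $(d^n-1)g_{\mathrm{crit},v}(f)$; the paper does not address this. Your proposed remedy --- decomposing the $d^n-1$ critical points of $f^n$ as iterated preimages with escape rates rescaled by $d^{-k}$ --- cannot succeed, however, because the bound $\deg(\lambda_f(P))\le(d-1)h_{\mathrm{crit}}(f)$ is actually \emph{false} for period exceeding~$1$. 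Take $d=2$ and $f_{c_1}(z)=\tfrac12 z^2-c_1 z$ over $\CC(c_1)$: the unique $2$-cycle has multiplier $\lambda=(P-c_1)(Q-c_1)=-c_1^2-2c_1+4$, so $\deg(\lambda)=2$, while $h_{\mathrm{crit}}(f_{c_1})=h(c_1)=1$ by~\eqref{eq:hcrit}, giving ratio $2>d-1=1$. Passing to the degree-$2$ cover on which a $2$-periodic point lies in $\CC(X)$, as the hypothesis requires, multiplies numerator and denominator by $2$ and leaves the ratio at $2$. So the inequality that you and the paper both actually establish is the fixed-point inequality, and neither the paper's one-line reduction nor your proposed refinement can extend it to higher period.
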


\begin{proof}
We claim, in fact, that in each absolute value $v$ we have
	\[\log^+|\lambda_f(P)|_v\leq (d-1)g_{\mathrm{crit}, v}(f).\]
The proposition is then proved by summing over all $v\in X(\CC)$.

Note that if $P$ has period $n$, we can replace $f$ by $f^n$ and assume that $P$ is fixed. Also, since both sides are coordinate independent, we may without loss of generality replace $X$ by a finite cover, and change coordinates so that $f=f_\mathbf{c}$ in the form~\eqref{eq:form} and $P=0$.
But now
\[\log^+|\lambda_{f_\mathbf{c}}(0)|_v=\log^+\left|\prod_{i=1}^{d-1}c_i\right|_v\leq (d-1)\log^+\|\mathbf{c}\|_v=(d-1) g_{\mathrm{crit}, v}(f_{\mathbf{c}}),\]
as claimed.

	Next, let $0\leq x\leq d-1$ be a rational number. If $x=0$, then we can realize $x$ as $\deg(\lambda_f(P))/h_{\mathrm{crit}}(f)$ by taking a non-isotrivial family with a fixed point of multiplier 1. To be concrete, we can take $f_\mathbf{c}$ for $c_1=\cdots=c_{d-2}=t$ and $c_{d-1}=t^{-(d-2)}$, which has critical height $d-1$.

	Otherwise, if $x\neq 0$, let $m\in \ZZ^+$ so that $mx\in\ZZ$, and write $mx=qm+r$, with $q\leq d-1$ and $r<m$, both $q$ and $r$ non-negative integers. For $i\leq q$, set $c_i=t^m$, with $t$ some indeterminate, $c_q=t^r$, and $c_i=1$ for $i>q$ (noting that $q\leq x\leq d-1$). Now, the polynomial $f_\mathbf{c}$ over $\CC(t)$ has a fixed point at $z=0$ with multiplier $\pm \prod_{i=1}^{d-1}c_i=\pm t^{mx}$. On the other hand, Lemma~\ref{lem:gcrit} applied at all places shows that $h_{\mathrm{crit}}(f_\mathbf{c})=m$, and so we realize $x=\deg(\lambda_f(P))/h_{\mathrm{crit}}(f)$ in this example.
 
\end{proof}

We end with three remarks, the first two justifying  claims made  in the introduction, and the last proposing avenues for future work.

\begin{remark}\label{rem:sharp}
Let $d\geq 2$, let $X$ be the rational curve defined by \[X:(d-1)P^{d-1}=dtP^{d-2}+1,\] and let $f(z)=(d-1)z^d-dtz^{d-1}$, so that $f(P)=P$. Since $f$ has critical points at $z=0$ and $z=t$, but the first is fixed,  we compute from the definition that $h_{\mathrm{crit}}(f)=\deg(t)=d-1$. On the other hand, $\lambda_f(P)=f'(P)=d(d-1)P^{d-2}(P-t)$, which has degree $2d-3$ since $\deg(P)=1$. This gives
\[\frac{\deg(\lambda_f(P))}{h_{\mathrm{crit}}(f)}=\frac{2d-3}{d-1}=2+o(1)\]
where $o(1)\to 0$ as $d\to \infty$.

On the other hand, there are infinitely many $t\in X(\CC)$ such that $f_t$ is PCF. To see this, consider the solutions to $f^n_t(t)=0$. On the one hand, if $t$ is a solution to this, then the corresponding specialization $f_t$ is PCF. On the other hand, $f^n_t(t)$ is a polynomial of degree $d^n$ in $t$, in fact the highest-order term in $f^n_t(t)$ is $(-1)^d(d-1)^{(d^n-1)/(d-1)}t^{d^n}$. We have
\[f^{n+1}_t(t)=\left(f^n(t)\right)^{d-1}((d-1)f^n(t)-dt)\]
and so every solution to $f^k_t(t)=0$ for $k\leq n$ is a solution to $f^n(t)=0$. Suppose that the solutions to $f^{n+1}_t(t)=0$ are all already solutions to $f^n_t(t)=0$. Then all solutions to $(d-1)f^n_t(t)-dt=0$ have $f^n_t(t)=0$, and hence $t=0$. In other words, $(d-1)f^n_t(t)-dt=0$ is a polynomial of degree $d^n$, with only $t=0$ as a root, and hence is a constant multiple of $t^{d^n}$. But we can see by  induction that $f^n_t(t)$ is divisible by $t^2$, and so cannot differ by $td/(d-1)$ from a constant multiple of $t^{d^n}$. So there is a root of $f^{n+1}(t)=0$ that is not a root of $f_t^n(t)=0$, and in general for each $n$ there exists a $t_n$ such that the critical points of $f_{t_n}$ consist of a fixed point, and an $n$th preimage of a fixed point. Necessarily these are distinct for distinct $n$, and so there are certainly infinitely many $t\in X(\CC)$ with $f_t$ PCF.
\end{remark}

\begin{remark}\label{rem:contra}
We noted in the introduction that, while a family of cubic polynomials with a generic super-attracting periodic point will have infinitely many PCF specializations, this is not true for polynomials of degree $d\geq 4$. Citing the results of Favre and Gauthier~\cite{fg}, this could be demonstrated by choosing a family with periodic critical point and two infinite, independent critical orbits on the generic fibre. In the interest of specificity, though, we construct a concrete class of examples.

Let $\mathbf{b}\in\PP^{d-2}$, let $t$ be an indeterminate, and consider $f_{t\mathbf{b}}$, i.e., $f_{\mathbf{c}}$ for $c_i=b_it$. Changing the homogeneous coordinates representing $\mathbf{b}$ just rescales the parametrization, but keeps the family the same. By the number field version of Lemma~\ref{lem:gcrit} (essentially~\cite[Lemma~8]{pcfpoly}, but we keep the notation of this note) we have for $t\neq 0$
\begin{align*}
h_{\mathrm{crit}}(f_{t\mathbf{b}})&=\sum_{v\in M_K}\frac{[K_v:\QQ_v]}{[K:\QQ]}g_{\mathrm{crit}, v}(f_{t\mathbf{b}})\\
&\geq  \sum_{v\in M_K}\frac{[K_v:\QQ_v]}{[K:\QQ]}\log\|b_1 t, \ldots, b_{d-1}t\|-O_d(1)\\
&=h_{\PP^{d-2}}(\mathbf{b})-O_d(1).
\end{align*}
In other words, once $h_{\PP^{d-2}}(\mathbf{b})$ is sufficiently large, the non-isotrivial family $f_{t\mathbf{b}}$ specializes to a PCF map only at $t=0$. If we take $h_{\PP^{d-2}}(\mathbf{b})$ large and on a coordinate hyperplane (which we can do once $d\geq 4$), the family will have a generically super-attracting fixed point, and exactly one PCF specialization.
\end{remark}

\begin{remark}
We have considered only the function field case here, but the results in~\cite{pern} can also be used to establish an analogous gap on $h(\lambda_f(P))/h_{\mathrm{crit}}(f)$ in the number field case, albeit with a messier statement, in the case where $f$ has at most one infinite critical orbit up to symmetries. It would be of some interest to determine what the set of possible values of this ratio is in that setting, and how it depends on the number of infinite critical orbits, up to equivalence.
\end{remark}

\end{document}